\newtheorem{theorem}{Theorem}
\newtheorem{definition}[theorem]{Definition}
\newtheorem{lemma}[theorem]{Lemma}
\newtheorem{proposition}[theorem]{Proposition}
\newtheorem{remark}[theorem]{Remark}
\definecolor{verylightgray}{gray}{0.95}
\newcolumntype{L}{>{$}l<{$}} 
\newcolumntype{G}{>{\columncolor{verylightgray}$}l<{$}}
\def\R{\mathbb{R}}
\def\phi{\varphi}
\def\xx{\mathbf{x}}
\def\uu{\mathbf{u}}
\def\ww{\mathbf{w}}
\def\zz{\mathbf{z}}
\def\Sin{s_{\text{in}}}
\def\Vin{v_{\text{in}}}
\def\Win{w_{\text{in}}}
\def\Wout{w_{\text{out}}}
\def\Pin{P_{\text{in}}}
\def\Pout{P_{\text{out}}}
\def\Py{P_{\text{yield}}}
\def\mumax{\mu_{\max}}
\def\rhomax{\rho_{\max}}
\def\phimax{\varphi_{\max}}
\def\qmin{q_{\min}}
\def\partials#1#2{\frac{\partial#1}{\partial#2}}
\def\pp#1{\left(#1\right)}
\def\bb#1{\left[#1\right]}
\def\cb#1{\left\{#1\right\}}
\def\transp#1{{#1}^{\top}}
\def\AA{\mathcal{A}}
\def\DD{\mathcal{D}}
\def\UU{\mathcal{U}}
\def\WW{\mathcal{W}}
\def\BB{\mathcal{B}}
\def\ZZ{\mathcal{Z}}
\DeclareMathOperator{\epi}{epi}
\DeclareMathOperator{\dom}{dom}
\title{\LARGE \bf
Single- and multi-objective performance optimization of an algal-bacterial
synthetic process
}
\author{Rand Asswad$^{1,2}$, Jean-Luc Gouzé$^{3,\dag}$, Eugenio Cinquemani$^{1,2,\dag}$
\thanks{$^\dag$These authors contributed equally to the work}%
\thanks{$^1$Université Grenoble Alpes, Inria, 38000 Grenoble, France\newline%
    {\tt rand.asswad@inria.fr} ; %
    {\tt eugenio.cinquemani@inria.fr}}%
\thanks{$^2$Université Grenoble Alpes, CNRS, LIPhy, 38000 Grenoble, France}
\thanks{$^3$Université Côte d’Azur, Inria, INRAE, CNRS, MACBES Team, 06902 Sophia Antipolis, France
    {\tt jean-luc.gouze@inria.fr}}%
}
\begin{document}

\maketitle
\thispagestyle{empty}
\pagestyle{empty}

\begin{abstract}
Microalgae are an important source of precursors (e.g. lipids) for a variety of biosynthetic processes (e.g. biofuel production). Their co-culturing with other organisms providing essential substrates for growth may reduce cost and provide new handles to control and robustify the production process. In previous work, we have introduced a nonlinear ordinary differential equation model for an optogenetically controllable algal-bacterial consortium, and studied maximization of algal biomass productivity in a continuous-flow bioreactor relative to optogenetic action and dilution rate.      

In this work, we expand the investigation of steady-state production performance for different objective criteria and control knobs. We additionally consider a yield criterion and a cost criterion, as well as a multiobjective optimization problem whose solution is shown to directly relate with a notion of net process profit. We investigate dependence of the optimal solutions on all the available bioprocess control knobs (optogenetics, dilution rate, richness of input medium), providing analytical results to characterize the solutions from different criteria and the relations among them, as well as simulations illustrating our results for a realistic set of biological system parameters.
\end{abstract}

\section{Introduction}\label{sec:intro}

Microalgae play a crucial role in biotechnology, offering diverse applications
across industrial and environmental sectors, including animal feed, fertilizers,
and pharmaceutical production \cite{rizwan_2018}.
Notably, their high lipid content makes them a promising source for biofuel
production. Amid the ongoing global energy crisis, the demand for renewable
energy sources has intensified, driving increased interest in biofuels as
a sustainable alternative to conventional fossil fuels
\cite{polat_2022,sajjadi_2018}.
The highlighted potential of algal biofuel raises the need to optimize
biofuel production processes. Biotechnological and economical criteria
are to be considered to render algae fuel a viable option
\cite{nodooshan_2018,jayaraman_2015}.
While mono-cultures remain dominant in industrial biotechnology,
synthetic microbial consortia have several advantages over single species
such as increased performance and resilience, compartmentalization, and
modular functionality. In particular, synthetic or catalytic properties
of present strains can be employed to replace external nutrient or catalyst
input feeds \cite{rapp_2020,shong_2012}.
Despite the fact that coexistence of different microbial strains is
ubiquitous in nature, maintaining coexistence in synthetic co-cultures
presents a major challenge due to the added complexity to the system
dynamics \cite{shong_2012,martinez_2023}.

We have presented in a previous work \cite{asswad_cdc2024}
a synthetic algal-bacterial consortium model describing the growth of
\textit{Auxenochlorella protothecoides} microalgae along with a strain of
\textit{Escherichia coli} bacteria in a continuous-flow stirred-tank bioreactor.
Bacteria are modified in order to synthesize \textit{thiamine} vitamin along its growth,
which is a limiting substrate to the growth of the algae.
Vitamin secretion is controlled via optogenetics \cite{raghavan_2020},
allowing modulating the proportion of bacterial metabolites dedicated to vitamin
synthesis and that for bacterial growth \cite{jong_2017}.
We have extensively studied in \cite{asswad_cdc2024} the system's
steady states and provided mathematical characterization for existence
and stability conditions.
We considered the maximization of a single criterion: microalgal productivity
(\textit{i.e.} harvested algal biomass from the bioreactor)
via the dilution rate and the optogenetic control. Two optimal control
problems were investigated: a static optimal control problem where the productivity
is maximized at the coexistence steady state using constant control inputs,
and a dynamic optimal control problem where productivity is maximized over
a finite time horizon using time-varying control functions.


Other authors have been dedicating attention to model-based analysis, optimization and even feedback control of microbial consortia. To give some examples, in \cite{harvey_2014}, an in-depth mathematical analysis of division of labor in a microbial consortium is provided, with the goal of explaining observed enhanced productivity of consortia relative to single species.
In \cite{bayen_2019}, optimization of biogas production at steady-state in a two-stage anaerobic digestion model in explored.
In \cite{treloar_2020,salzano_2022}, feedback control strategies are considered with the ultimate goal to optimize cooperative bioproduction of prototypical microbial consortia. A discussion of recent advances, challenges and perspectives of control of microbial populations from a modelling and optimization viewpoint is provided in 
\cite{bertaux_2021}.
Despite the higher productivity that can be achieved with dynamic control \cite{asswad_cdc2024}, 
optimization of productivity or other objective criteria at steady state is interesting from a practical viewpoint
as constant controls are easy to implement in biotechnological applications.
The relative simplicity of the resulting problems compared to dynamic counterparts simplifies exploration of problems in several control variables, and it allows for the study of multi-objective optimization problems. It is also worth remarking that solution of a steady-state problem is often the starting point for the implementation of a robust control strategy around the optimal state sought~\cite{treloar_2020,salzano_2022}. 

In this article, we extend the steady-state optimization analysis of our previous work~\cite{asswad_cdc2024} in several directions. We introduce several single and multiple optimization criteria, and study the optimal solutions relative to all optimization parameters entering the model for the microbial consortium biosynthesis process (namely dilution rate, optogenetic control and richness of the bacterial growth substrate provided as input). In more detail, we first review the consortium model and the conditions for
algal-bacterial coexistence in Section \ref{sec:model}.
In Section \ref{sec:pb}, we introduce the steady-state optimization problems and
formalize the relevant decision variables. 
We review the criterion of productivity explored in~\cite{asswad_cdc2024}, 
and further consider bioreactor yield  as well as a multi-objective optimization
framework that is shown to be directly related with a concept of bioreactor's
running net profit.
In Section \ref{sec:opti}, we analyze these optimization problems with respect
to two decision variables (dilution rate and optogenetic control),
establishing properties of
existence and uniqueness of solutions (scalar criteria) and of the Pareto solutions (multiple criteria). We support our findings with numerical
simulations based on reference parameters.
Section \ref{sec:feed} extends this analysis by considering the remaining 
decision variable (substrate richness).
We investigate its impact on the mathematical properties established
in Section \ref{sec:opti} and its influence on the solutions.
Simulations and mathematical analysis are provided to guide the appropriate
selection of this added variable.
Finally, we summarize our findings and discuss future research perspectives in
Section \ref{sec:conclusion}.

\section{Algal-bacterial consortium model}\label{sec:model}

The model we consider is the consortium presented in
\cite{asswad_cdc2024}, describing a co-culture of
\textit{Escherichia coli} bacteria and of a microalgal strain of the family of chlorella  
(\textit{Auxenochlorella protothecoides}) in a continuously stirred-tank bioreactor.
This algal strain has the peculiarity to need vitamin B1 (thiamine) for growth.
In this consortium, the vitamin is synthesized by the (suitably engineered)
bacteria in dependence of the action of optogenetic control.
\par
We denote by $s$ [$g\cdot L^{-1}$], $e$ [$g\cdot L^{-1}$],
$v$ [$mg\cdot L^{-1}$], and $c$ [$g\cdot L^{-1}$]
the glucose substrate, \textit{E. coli} biomass, secreted vitamin,
and chlorella biomass concentration in the (fixed volume) bioreactor,
in the same order. We further denote by $q$ [$mg\cdot g^{-1}$] the
internal algal quota of the vitamin.
The state of the system at time $t$ is defined as
$\xx(t)=\transp{(s(t),e(t),v(t),q(t),c(t))}$.
The system dynamics are given by a cascade of a Monod model for \textit{E. coli} growth on glucose,
modified to account for the synthesis of vitamins under optogenetic control,
and of a so-called variable yield Droop model (see~\cite{asswad_cdc2024} and references therein)
for algal growth as a function of vitamin availability. They take the form
\begin{align}
\dot{s} &= -\frac{1}{\gamma}\varphi(s)e + d(\Sin - s)\label{eq:ds}\\
\dot{e} &= (1 - \alpha)\varphi(s)e - de\label{eq:db}\\
\dot{v} &= \alpha\beta\varphi(s)e - \rho(v)c - dv\label{eq:dv}\\
\dot{q} &= \rho(v) - \mu(q)q\label{eq:dq}\\
\dot{c} &= \mu(q)c - dc\label{eq:da}
\end{align}
where the functions $\phi$ and $\rho$, defined over $[0,\infty)$,
and $\mu$, defined over $[\qmin,\infty)$, have the expressions
\begin{equation*}
\phi(s)=\frac{\phimax s}{k_s+s},
\rho(v)=\frac{\rhomax v}{k_v+v},
\mu(q)=\mumax\pp{1-\frac{\qmin}{q}}.
\end{equation*}
Let $\Omega=\R_+^5\setminus\cb{q<\qmin}$ be the state space.
In fact, $\xx(0)\in\Omega$ implies that $\xx(t)\in\Omega$ for all $t\geq 0$.

The variables $d$ and $\Sin$ are operational parameters of the
bioreactor representing the dilution rate and the input substrate
feed respectively. As for $\alpha\in[0,1]$ appearing in
\eqref{eq:db}-\eqref{eq:dv} represent the proportion of glucose
resources going towards vitamin synthesis while the remaining
$1-\alpha$ goes toward bacterial growth.
The factor $\gamma$ represents the bacterial growth yield and
$\beta$ represents the vitamin synthesis yield. The functions
$\phi(s)$ and $\mu(q)$ are the growth rates per capita for
bacteria and algae respectively, and $\rho(v)$ is the vitamin
uptake rate per capita of algae.
In fact, the yields constants $\gamma$ and $\beta$ as well as
the parameters defining $\phi,\rho$, and $\mu$ are intrinsic
biological parameters that are reported in Table \ref{tab:jc-params}
taken from \cite{asswad_cdc2024}.

\begin{table}[t]
    \centering
    \caption{Biological model parameters from \cite{asswad_cdc2024}}
    \label{tab:jc-params}
    \begin{tabular}{|G|LL|G|LL|}
        \hline
        k_v & 0.57 & mg{\cdot}L^{-1} &
        k_s & 0.09& g{\cdot}L^{-1} \\
        \rhomax & 27.3 & mg{\cdot}g^{-1}{\cdot}\text{day}^{-1}&
        \phimax & 6.48 & \text{day}^{-1}\\
         q_{\min} & 2.76 &  mg{\cdot}g^{-1} &
        \gamma & 0.44 & g{\cdot}g^{-1}\\
        \mumax & 1.02 & \text{day}^{-1} &
        \beta & 23 & mg{\cdot}g^{-1}\\
        \hline
    \end{tabular}
\end{table}

\subsection{Coexistence at steady states}

\begin{table}[b]
    \centering
    \caption{Existence of equilibria over $\Omega$
    and their stability over
    $\Omega\setminus\cb{e=0\cup c=0}$
    with $d_1(\alpha,\Sin)=\psi_\alpha(\Sin)$
    and $d_2(\alpha,\Sin)=(1-\alpha)\phi(\Sin)$}
    \label{tab:eq_stability}
    \begin{tabular}{|G|c|c|c|}
        \hline\rowcolor{verylightgray}
        & $0   < d < d_1$
        & $d_1 < d < d_2$
        & $d_2 < d$\\\hline
        \xx_0     & unstable & unstable & GAS\\
        \xx_{1,0}    & unstable & GAS & --\\
        \xx_{1,1}    & GAS      & -- & --\\\hline
    \end{tabular}
\end{table}

In our previous work \cite{asswad_cdc2024}, we have
proved the existence of a washout equilibrium at
$\xx_0=\transp{(\Sin,0,0,\qmin,0)}$ regardless of the system
parameters. Furthermore, an algal washout equilibrium exists
at $\xx_{1,0}=\transp{(s^*,e^*,\Vin^*,q_0,0)}$ if
$d<(1-\alpha)\phi(\Sin)$ with
\begin{equation}\label{eq:bacterial-eq}
\begin{aligned}
s^*=\phi^{-1}\pp{\frac{d}{1-\alpha}},\quad
&e^*=(1-\alpha)\gamma(\Sin - s^*),\\
\Vin^*=\alpha\beta\gamma(\Sin-s^*),\quad
&q_0=\qmin+\rho(\Vin^*)/\mumax.
\end{aligned}
\end{equation}
Finally, a coexistence equilibrium state may exist
at $\xx_{1,1}=\transp{(s^*,e^*,v^*,q^*,c^*)}$ with
\begin{equation}\label{eq:algal-eq}
v^*=\rho^{-1}(d\cdot\mu^{-1}(d)),
~q^*=\mu^{-1}(d),~c^*=\frac{\Vin^*-v^*}{q^*}.
\end{equation}
The coexistence equilibrium exists if $d<\psi_\alpha(\Sin)$
where $\psi_\alpha$ is an increasing function defined such that 
\begin{equation}\label{eq:psi-alpha}
\psi_\alpha^{-1}(y) = \phi^{-1}\pp{\frac{y}{1-\alpha}}
    + \frac{\rho^{-1}(y\cdot\mu^{-1}(y))}{\alpha\beta\gamma}.
\end{equation}
Stability and existence of the equilibria, as reported in Table
\ref{tab:eq_stability}, are studied in \cite{asswad_cdc2024}.

\section{Single- and multi-objective optimization problems}\label{sec:pb}

In this section we introduce the optimization problems of interest.
Having established the necessary and sufficient conditions for
the existence of the coexistence equilibrium $\xx_{1,1}$, referred to
from this point onward as $\xx^*$,
we have yet to introduce the decision variables that constitute the
control $\uu$ from the appropriate set of admissible controls $\UU$
that ensure the coexistence at the functional steady state $\xx^*(\uu)$.
Then we consider the optimization criteria to take into account
that are relevant in biotechnology.

\subsection{Decision variables}

Existence of equilibria as well as their values
are characterized by the system's parameters as reported
in the previous section (see Table \ref{tab:eq_stability}).
While the parameters reported in Table \ref{tab:jc-params}
are intrinsic to the system, relating to the bacterial and algal
biological properties, the dilution rate $d$ and the substrate feed $\Sin$
are operational parameters of the bioreactor that can be adjusted as best
for the biotechnological process.
Finally, the $\alpha\in(0,1)$ models the amount of resources that
are allocated for bacterial growth or for vitamin synthesis.
In this sequel, we consider optogenetic control through $\alpha$
following \cite{asswad_cdc2024,raghavan_2020}.

In short, we consider the parameters $\alpha,d,$ and $\Sin$
as decision variables for the control problem. We show later
that performance improves as $\Sin$ increases
\cite{martinez_2023, mauri_2020},
we therefore consider in section \ref{sec:opti} the problem
for a fixed $\Sin$ with free control variables $\uu=(\alpha,d)$
to be chosen in $\UU$, such as $\UU$ is the set of all values of
$(\alpha,d)$ for which the coexistence steady state $\xx^*(\uu)$
is well-defined.
We expand the scope in section \ref{sec:feed} to study
the effect of $\Sin$ on the problem properties and its solution,
where $\ww=(\alpha,d,\Sin)\in\WW$ for the appropriate $\WW$.

\subsection{Optimization criteria}

Naturally, we aim to maximize the amount of microalgae
that is harvested from the bioreactor which is the quantity
\begin{equation}\label{eq:Pout}
\Pout = d\cdot c^* =
    \frac{\alpha\beta\gamma d\pp{\Sin - \psi_\alpha^{-1}(d)}}{\mu^{-1}(d)},
\end{equation}
and is commonly referred to
as the \textbf{microalgal productivity}.
Another quantity to consider is the glucose feed
that enters the bioreactor, characterized as $\Pin=d\cdot\Sin$.
The \textbf{nutrient feed} being a cost to the bioprocess,
we minimize this amount.
A common metric for the efficiency of a bioreactor synthesis
process is the \textbf{bioreactor yield}, which is the ratio
of the harvested target biomass with respect to the nutrient feed
\cite{martinez_2023}, here simply
$\Py=\Pout/\Pin=c^*/\Sin$.

While the bioreactor yield combines the input and the target
output of the bioreactor, it only takes into account the
volumetric efficiency of the process. However, this criterion
does not account for the running economical cost and
the profitability of the bioprocess. From a biotechnological
standpoint, the bioreactor efficiency is optimized by
maximizing the microalgal productivity $\Pout$ and minimizing the
nutrient feed $\Pin$ simultaneously.
Which corresponds essentially to a multiobjective optimization
problem (MOP) defined as
\begin{equation}\label{eq:MOP}
\max\limits_{\uu\in\UU} \cb{\Pout(\uu), -\Pin(\uu)},
\end{equation}
where an admissible solution $\uu\in\UU$ that maximizes both
$\Pout$ and $-\Pin$ is sought.

\subsection{Bioreactor net profit}\label{subsec:net-profit}

We prove in sections \ref{sec:opti} and \ref{sec:feed} that there
is no feasible point that optimizes both criteria $\Pout$ and $\Pin$
simultaneously. This calls for a weaker notion of optimality that
is based on compromise and an external decision maker \cite{miettinen_1998}.

\begin{definition}[Pareto optimality]
A feasible solution is \textit{Pareto optimal} if none of the objective
functions could be improved without deteriorating at least one other objective.
This means that for the maximization of $\cb{f_1(\zz),\ldots,f_k(\zz)}$
functions for $\zz\in\ZZ$, a solution $\zz^*\in\ZZ$ is Pareto optimal if there
is no $\zz\in\ZZ$ such that $f_i(\zz^*)\leq f_i(\zz)$ for all $i\in\cb{1,\ldots,k}$ with
$f_j(\zz^*)<f_j(\zz)$ for some $j\in\cb{1,\ldots,k}$ \cite{miettinen_1998}.
\end{definition}
\begin{definition}[Pareto optimal front]
The Pareto optimal front (POF) is the set of Pareto optimal solutions.
\end{definition}

Solving the MOP consists of constructing the POF.
We achieve this by using the weighting method, which constructs
a scalar objective function that is a linear combination of
the objective functions, where the coefficients (or weights)
are positive and add up to $1$. In the case of two objectives,
the weights can be simply $\theta$ and $1-\theta$ with $\theta\in[0,1]$,
\cite{miettinen_1998}.
The weighting problem for \eqref{eq:MOP} is given as
\begin{equation}\label{eq:scalar-MOP}
\max\limits_{\uu\in\UU}\quad
P_{\theta}(\uu) = \theta\Pout(\uu) + (1-\theta)[-\Pin(\uu)].
\end{equation}
The weighting method allows obtaining \textit{locally} Pareto optimal
solutions. Nevertheless, stronger properties of the solutions
are discussed in sections \ref{sec:opti} and \ref{sec:feed}.

We note that the corresponding weighting problem \eqref{eq:scalar-MOP}
corresponds to maximizing the \textbf{net profit} of the bioreactor.
Let $\Win$ and $\Wout$ be the costs of a gram of glucose and a gram
of microalgae respectively. The net profit of the bioreactor is
the running cost difference between the harvested microalgae and
the bioreactor feed which is
$\Wout\Pout - \Win\Pin$.
The expression of $P_{\theta}$ can be found by dividing this expression
by $\Win + \Wout$ and setting $\theta=\Wout / (\Win + \Wout)$
which gives $\theta\in[0,1]$ for all positive $\Win$ and $\Wout$.

Apart from constructing the POF, the net profit provides insight
for the industrial biotechnology operators allowing to choose
the optimal values for the decision values for any given
set of costs $\Wout$ and $\Win$.

\section{Properties of the optimization problems for a fixed $\Sin$}\label{sec:opti}

In this section we restrict the problem to the control
$\uu=\transp{(\alpha,d)}\in\UU$, for a fixed $\Sin$,
where the set of admissible controls is defined as the set defining
presence of the coexistence equilibrium $\xx^*$, formalized as
\begin{equation}\label{eq:U-2D}
\UU = \cb{(\alpha,d)\in(0,1)\times\R_+^*,\hspace{0.5em}
\psi_\alpha^{-1}(d)<\Sin}.
\end{equation}

In \cite{asswad_cdc2024}, we have shown that the $\Pout(\alpha,d)$
is logarithmically bi-concave (\textit{i.e.} log-concave with respect
to $\alpha$ and to $d$ separately \cite{gorski_2007}) over $\UU$.
In~\cite{automatica_under_review} (an extended version
of~\cite{asswad_cdc2024}),
we have strengthened the result by showing that $\UU$ is a convex set,
and that $\Pout(\alpha,d)$ is logarithmically concave,
therefore admits a global maximum on $\UU$,
as shown in Figure \ref{fig:productivity}.
In contrast, the nutrient feed $\Pin(d)=d\cdot\Sin$ is an affine
function (for a fixed $\Sin$), and therefore has its optima
at the border of $\UU$.

\begin{figure}[t]
    \centering
    \includegraphics[width=\columnwidth]{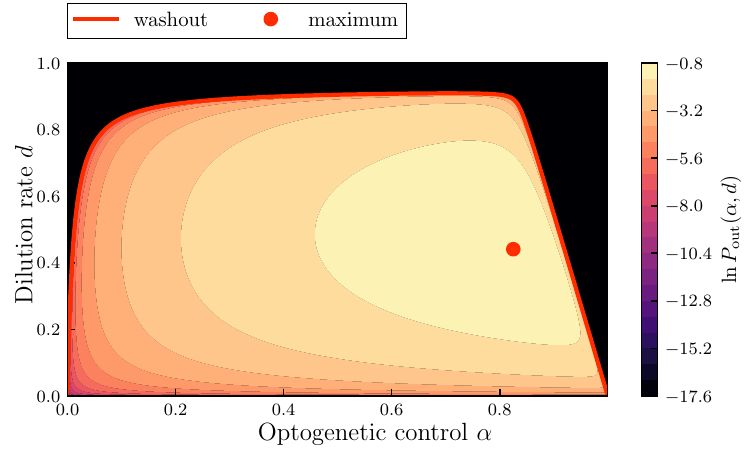}
    \vspace{-1em}
    \caption{Logarithm of the objective function $\Pout(\alpha,d)$ contours
    (borders of the domain of interest, ensuring nonzero algal biomass,
    indicated in solid red line), as well as its global maximum.} 
    \label{fig:productivity}
\end{figure}

\subsection{Bioreactor yield}

\begin{figure}[t]
    \centering
    \includegraphics[width=\columnwidth]{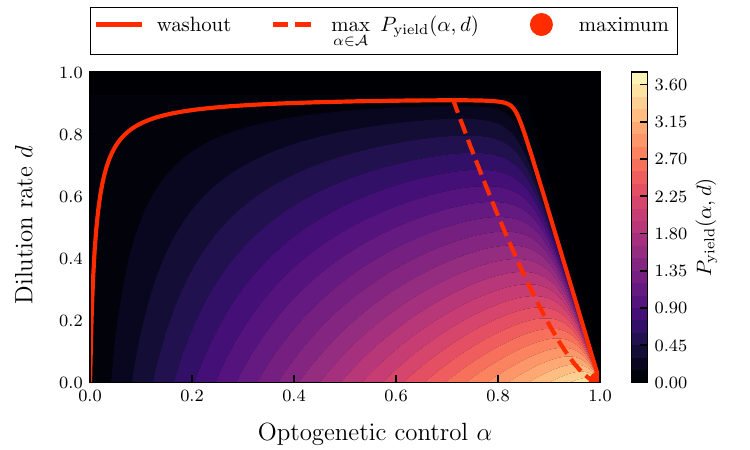}
    \vspace{-1em}
    \caption{Objective function $\Py(\alpha,d)$ contours, and
    $\max\limits_{\alpha\in\AA} \Py(\alpha,d)$ for all $d$ (dashed red line).} 
    \label{fig:yield}
\end{figure}

The bioreactor yield function $\Py(\alpha,d)$ does not enjoy
the same properties as the productivity $\Pout$. We can, nevertheless,
investigate its properties and exploit them to find its global
maximum analytically.

The expression of the yield is given as
\begin{equation}\label{Py}
\Py(\alpha,d) =
\frac{\alpha\beta\gamma\pp{\Sin - \psi_\alpha^{-1}(d)}}{\Sin\mu^{-1}(d)}.
\end{equation}
Let $\AA=(0,1)$ be an open interval, for a fixed
$\alpha\in\AA$ we define the open interval
$\DD_\alpha=(0,\psi_\alpha^{-1}(\Sin))$.

\begin{proposition}
For each fixed $\alpha\in\AA$, the function $\Py(\alpha,d)$
is strictly decreasing for all $d\in\DD_\alpha$.
\end{proposition}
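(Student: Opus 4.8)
The plan is to show $\partials{\Py}{d}<0$ for every fixed $\alpha\in\AA$ and every $d\in\DD_\alpha$. From \eqref{Py}, for fixed $\alpha$ we can write $\Py(\alpha,d)=\frac{\alpha\beta\gamma}{\Sin}\cdot g(d)$ where $g(d)=\frac{\Sin-\psi_\alpha^{-1}(d)}{\mu^{-1}(d)}$, so the sign of $\partials{\Py}{d}$ is the sign of $g'(d)$ and the positive prefactor is irrelevant. First I would assemble the monotonicity facts for the two building blocks on $\DD_\alpha=(0,\psi_\alpha^{-1}(\Sin))$: the numerator $N(d)=\Sin-\psi_\alpha^{-1}(d)$ is strictly decreasing (since $\psi_\alpha^{-1}$ is increasing, being the inverse of the increasing function $\psi_\alpha$) and strictly positive on $\DD_\alpha$ (it vanishes exactly at the right endpoint $d=\psi_\alpha^{-1}(\Sin)$, i.e.\ $\psi_\alpha^{-1}(d)<\Sin$ there); and the denominator $M(d)=\mu^{-1}(d)$ is strictly increasing and strictly positive, because $\mu(q)=\mumax(1-\qmin/q)$ is strictly increasing in $q$ so its inverse is strictly increasing in $d$, with $M(d)>\qmin>0$.

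With these in hand the cleanest route is to avoid differentiating altogether and argue by direct comparison of function values. Take any $0<d_1<d_2<\psi_\alpha^{-1}(\Sin)$ in $\DD_\alpha$. Then $N(d_1)>N(d_2)>0$ while $0<M(d_1)<M(d_2)$, so $\frac{N(d_1)}{M(d_1)}>\frac{N(d_2)}{M(d_2)}$ since a strictly positive, strictly decreasing numerator divided by a strictly positive, strictly increasing denominator yields a strictly decreasing ratio. Hence $g(d_1)>g(d_2)$, which establishes that $\Py(\alpha,\cdot)$ is strictly decreasing on $\DD_\alpha$. This comparison argument is slightly cleaner than computing $g'=\frac{N'M-NM'}{M^2}$ and checking $N'M-NM'<0$, although that calculus version works identically: $N'<0$ and $M'>0$ with $N,M>0$ force both terms of $N'M-NM'$ to be negative.

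The main subtlety, and the only place requiring care, is the strict positivity of $N$ on the open interval $\DD_\alpha$. I would make explicit that $\psi_\alpha^{-1}$ is continuous and strictly increasing (from \eqref{eq:psi-alpha}, as $\phi^{-1}$, $\rho^{-1}$, $\mu^{-1}$ are all increasing), so that on $(0,\psi_\alpha^{-1}(\Sin))$ we indeed have $\psi_\alpha^{-1}(d)<\psi_\alpha^{-1}(\psi_\alpha^{-1}(\Sin))$—being careful to apply monotonicity of $\psi_\alpha^{-1}$ directly to conclude $\psi_\alpha^{-1}(d)<\Sin$, which is exactly the admissibility condition $(\alpha,d)\in\UU$ from \eqref{eq:U-2D}. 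Everything else is elementary; no appeal to the log-concavity of $\Pout$ is needed, since the yield drops the factor $d$ present in $\Pout$ and the resulting quotient is monotone for structural sign reasons rather than concavity reasons.
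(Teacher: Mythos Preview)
Your argument is correct and essentially identical to the paper's: you write $\Py(\alpha,d)$ (up to a positive constant) as the quotient of the positive strictly decreasing factor $\Sin-\psi_\alpha^{-1}(d)$ by the positive strictly increasing factor $\mu^{-1}(d)$, whereas the paper writes it as the product of $\Sin-\psi_\alpha^{-1}(d)$ with $1/\mu^{-1}(d)$, both positive and strictly decreasing. The only wobble is the line $\psi_\alpha^{-1}(d)<\psi_\alpha^{-1}(\psi_\alpha^{-1}(\Sin))$, which is not the inequality you want; but you immediately correct course by invoking the admissibility condition $\psi_\alpha^{-1}(d)<\Sin$ from \eqref{eq:U-2D}, so no harm is done.
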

\begin{proof}
We first notice that the affine function
${1/\mu^{-1}(d)=(\mumax-d)/\qmin\mumax}$
is positive and strictly decreasing on $\DD_\alpha$.
Furthermore, $\psi_\alpha^{-1}(d)$ is strictly increasing for
all $d\in\DD_\alpha$ because it is the positive linear combination
of strictly increasing functions composed with increasing functions in $d$,
therefore $d\mapsto\Sin - \psi_\alpha^{-1}(d)$
is positive and strictly decreasing on $\DD_\alpha$.
Finally, $d\mapsto \Py(\alpha,d)$ is the product of two positive strictly
decreasing functions therefore it is also strictly decreasing for all
$d\in\DD_\alpha$.
\end{proof}
It follows that for each $\alpha\in\AA$, the bioreactor yield
increases as $d$ decreases.
Let $\overline{\DD_\alpha}=[0,\psi_\alpha^{-1}(\Sin)]$ be the closure
of $\DD_\alpha$, it follows that for all $\alpha\in\overline\AA=[0,1]$,
\begin{equation}
\max_{d\in\overline{\DD_\alpha}} \Py(\alpha,d) = \Py(\alpha,0).
\end{equation}
Consequently, the maximization problem of $\Py(\alpha,d)$ over $\overline{\UU}$
reduces to the maximization $\Py(\alpha,0)$ over $\overline{\AA}$,
since
\begin{equation}
\max\limits_{(\alpha,d)\in\overline{\DD_\alpha}\times\overline{\AA}}
\Py(\alpha,d) = \max\limits_{\alpha\in\overline{\AA}}
\max\limits_{d\in\overline{\DD_\alpha}} \Py(\alpha,d).
\end{equation}
From \eqref{Py} and \eqref{eq:psi-alpha} we have
\begin{equation}\label{Py-d0}
\Py(\alpha,0) = \frac{\alpha\beta\gamma}{\qmin},
\end{equation}
which is affine and increasing with respect to $\alpha$ and reaches
its maximum at $\alpha=1$.
Finally, the bioreactor yield maximization problem over $\overline{\UU}$
has a trivial solution at $(1,0)$ as showin in Figure \ref{fig:yield}.

\begin{remark}\label{rk:yield}
For all $d\in\DD_\alpha$, the function $\Py(\alpha,d)$ is strictly concave
with respect to $\alpha$ over $\AA$. Consequently maximizing $\Py$
for a fixed $d\in\DD_\alpha$ gives a nontrivial solution
(Figure \ref{fig:yield}).
\end{remark}
\begin{proof}
This stems directly from the fact that $\Pout(\alpha,d)$ is strictly concave
with respect to $\alpha$ for any fixed $d$
(Lemma 2 in \cite{asswad_cdc2024})
since $\Py(\alpha,d)=\Pout(\alpha,d)/\Pin(d)$.
\end{proof}

In conclusion, maximizing the bioreactor yield degenerates to the special
case of a batch process ($d=0$). If continuous production is the goal, provided
a given extraction rate $d>0$ is of interest, Remark \ref{rk:yield} shows
that one can optimally and uniquely choose $\alpha$ for the given value
of $d$.

\subsection{Bioreactor net profit}

\begin{figure}[t]
    \centering
    \includegraphics[width=\columnwidth]{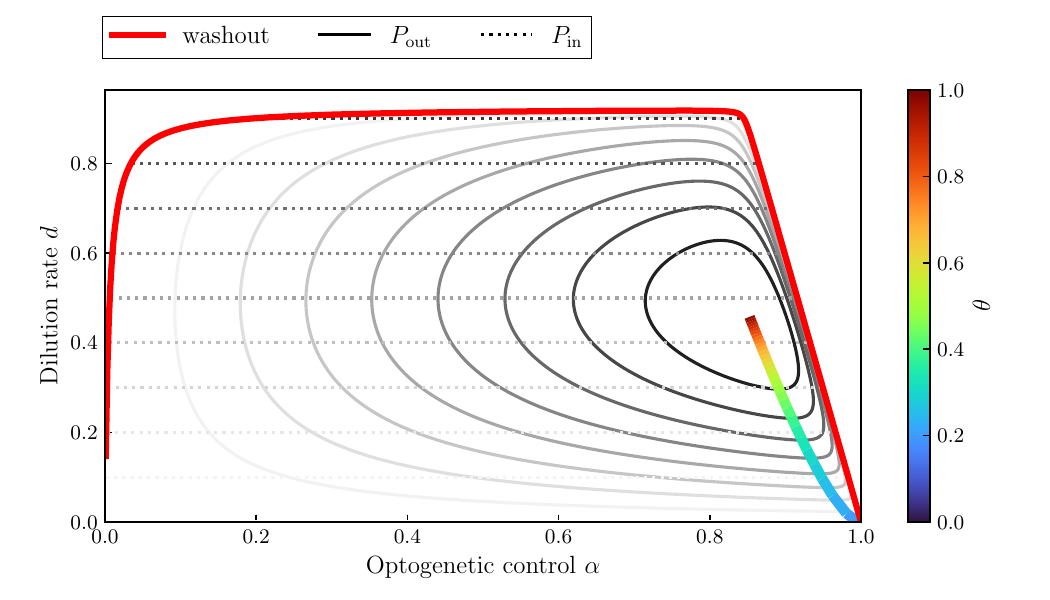}
    \caption{Contours of objective functions $\Pout$ and $\Pin$ over
    the set of admissible controls $\UU$.
    The color-gradient line is the set Pareto-optimal solutions for $\theta\in[0,1]$.}
    \label{fig:pareto-contours}
\end{figure}

\begin{figure}
    \centering
    \includegraphics[width=\columnwidth]{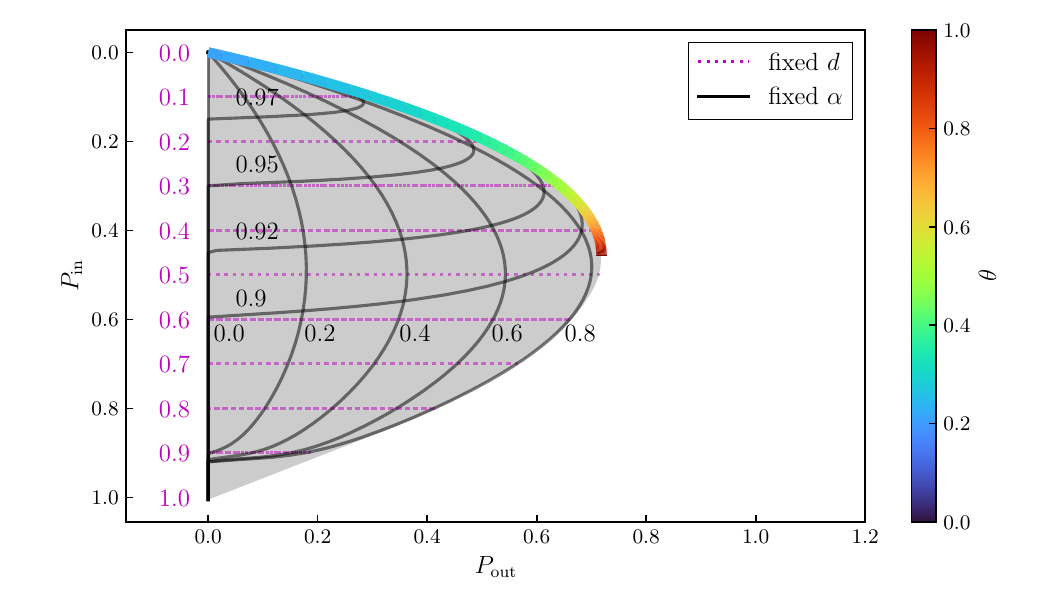}
    \caption{The image of $\UU$ in the multi-objective space; the color-gradient line
    represents the image of the POF through $P_\theta$ for $\theta\in[0,1]$,
    corresponding to the color-gradient line in Figure \ref{fig:pareto-contours}.}
    \label{fig:pof}
\end{figure}

As discussed in section \ref{sec:pb}, maximizing productivity
might not be the best course of action in practice, mainly for
economical reasons, and considering the running cost of the
bioreactor represents a matter of interest in biotechnology.
While the bioreactor yield takes the input feed into account,
it might not provide the needed insight in the context of
an industrial bioprocess, on top of the fact that we have shown
how its maximum does not correspond to our system of interest.
Hence, we consider instead the MOP defined in \eqref{eq:MOP}
and the associated weighted scalar problem \eqref{eq:scalar-MOP}
corresponding to the net profit.

The weighting method scalarizes the MOP in order to obtain \textit{locally}
Pareto optimal solutions. We now explore stronger properties of the solutions.

\begin{definition}
A decision vector $\zz^*\in\ZZ$ is \textbf{locally Pareto optimal} if there
exists $\delta>0$ such that $\zz^*$ is Pareto optimal in $\ZZ\cap B(\zz^*,\delta)$
where $B(\zz^*,\delta)$ denotes an open ball with a center $\zz^*$ and a radius
$\delta>0$ \cite{miettinen_1998}.
\end{definition}

\begin{lemma}\label{lem:global-POF}
Every \emph{locally} Pareto optimal solution is also \emph{globally} Pareto optimal
if the feasible region is convex and the objective functions are quasiconcave
with at least one strictly quasiconcave for the maximization problem
({Theorem~2.2.4}. in \cite{miettinen_1998}).
\end{lemma}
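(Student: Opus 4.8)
The plan is to argue by contradiction, exploiting convexity of $\ZZ$ to transport a hypothetical globally dominating point into the local neighbourhood on which $\zz^*$ is assumed Pareto optimal. So I would assume $\zz^*$ is locally Pareto optimal, i.e.\ Pareto optimal on $\ZZ\cap B(\zz^*,\delta)$ for some $\delta>0$, but suppose for contradiction that it is \emph{not} globally Pareto optimal. Then there exists $\hat\zz\in\ZZ$ with $f_i(\zz^*)\le f_i(\hat\zz)$ for all $i$ and $f_j(\zz^*)<f_j(\hat\zz)$ for some $j$. Since such a $\hat\zz$ lying in $B(\zz^*,\delta)$ would already contradict local optimality, necessarily $\norm{\hat\zz-\zz^*}\ge\delta$; in particular $\hat\zz\ne\zz^*$.

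First I would use convexity of $\ZZ$ to place the whole segment $\zz_\lambda=\lambda\hat\zz+(1-\lambda)\zz^*$, $\lambda\in[0,1]$, inside $\ZZ$, and then pick $\lambda\in(0,1)$ small enough that $\norm{\zz_\lambda-\zz^*}=\lambda\norm{\hat\zz-\zz^*}<\delta$, so that $\zz_\lambda\in\ZZ\cap B(\zz^*,\delta)$. Next, for each objective I would invoke quasiconcavity to obtain $f_i(\zz_\lambda)\ge\min\cb{f_i(\hat\zz),f_i(\zz^*)}=f_i(\zz^*)$, the equality holding because $f_i(\hat\zz)\ge f_i(\zz^*)$ by the choice of $\hat\zz$. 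This already yields that $\zz_\lambda$ \emph{weakly} dominates $\zz^*$ in every coordinate.

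The crux — and the only place the extra hypothesis enters — is upgrading this weak domination to a strict improvement in at least one coordinate. Relabelling so that $f_1$ is the strictly quasiconcave objective, and using $\hat\zz\ne\zz^*$ together with $\lambda\in(0,1)$, strict quasiconcavity gives $f_1(\zz_\lambda)>\min\cb{f_1(\hat\zz),f_1(\zz^*)}=f_1(\zz^*)$. I would stress that this conclusion is uniform in whether $f_1(\hat\zz)$ strictly exceeds or merely equals $f_1(\zz^*)$, precisely because the minimum of the two endpoint values equals $f_1(\zz^*)$ in both cases; this is exactly what lets the single strictly quasiconcave function do the work irrespective of how the dominating improvement is distributed among the objectives. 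Combining the two estimates, $\zz_\lambda$ dominates $\zz^*$ (weakly in all objectives, strictly in $f_1$) while lying in $\ZZ\cap B(\zz^*,\delta)$, contradicting local Pareto optimality and forcing $\zz^*$ to be globally Pareto optimal.

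I expect the main obstacle to be the bookkeeping around the strict quasiconcavity convention: the argument closes cleanly only if strict quasiconcavity is taken to deliver the strict inequality for \emph{all} distinct endpoints, not merely for endpoints with differing function values. Under the weaker (semistrict) reading the step above stalls in the case $f_1(\hat\zz)=f_1(\zz^*)$, where no other objective is guaranteed to improve strictly along the segment — a genuinely quasiconcave $f_j$ may stay flat near $\zz^*$ before increasing toward $\hat\zz$. I would therefore fix the convention at the outset, after which no separate case analysis is needed.
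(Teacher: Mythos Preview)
The paper does not prove this lemma at all; it is stated as a citation of Theorem~2.2.4 in \cite{miettinen_1998} and used as a black box in the proof of Proposition~\ref{prop:global-POF}. Your argument is correct and is essentially the standard proof one finds in that reference: pull a global dominator back along the segment into the local neighbourhood using convexity of the feasible set, then use quasiconcavity for weak domination and strict quasiconcavity of one objective for the strict coordinate. Your closing remark about the convention on strict versus semistrict quasiconcavity is well placed --- the paper later applies the lemma with $\Pin$ affine (hence strictly quasiconcave only in the strong sense, since it is nonconstant but can be constant along level sets in the $(\alpha,d)$ plane only because it depends on $d$ alone), so the strong reading is indeed the one intended.
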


\begin{definition}[Quasiconcave function]
A function $f:\R^n\rightarrow\R$ is quasiconcave if its domain $\dom f$
and all its superlevel sets
$$C_\eta(f)=\cb{\zz\in\dom f, f(\zz)\geq \eta}$$
for $\eta\in\R$, are convex \cite{boyd_2004,avriel_2010}.
\end{definition}

\begin{proposition}\label{prop:global-POF}
Solutions of the weighted problem defined in \eqref{eq:scalar-MOP}
are globally Pareto optimal solutions for the feasible region $\UU$
defined in \eqref{eq:U-2D}.
\end{proposition}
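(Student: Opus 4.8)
The plan is to obtain the result as a corollary of Lemma~\ref{lem:global-POF}. The weighting method, with strictly positive weights $\theta\in(0,1)$, guarantees that any maximizer of the scalarized objective $P_\theta$ in \eqref{eq:scalar-MOP} is (at least) \emph{locally} Pareto optimal for the MOP \eqref{eq:MOP}; Lemma~\ref{lem:global-POF} then upgrades this to \emph{global} Pareto optimality provided three conditions hold on the feasible region: $\UU$ is convex, both objectives $\Pout$ and $-\Pin$ are quasiconcave, and at least one of them is strictly quasiconcave. So the whole proof reduces to verifying these three hypotheses, after which the lemma applies verbatim.

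Two of the hypotheses follow from facts already collected in Section~\ref{sec:opti}. Convexity of $\UU$ is exactly the property established in \cite{automatica_under_review}. Quasiconcavity of $-\Pin(d)=-d\,\Sin$ is immediate, since it is affine in $(\alpha,d)$ and its superlevel sets are half-planes intersected with the convex set $\UU$. For $\Pout$, I would first note that $\Pout>0$ on $\UU$ (there $c^*>0$ and $d>0$), so that $\log\Pout$ is well-defined and, by the log-concavity of $\Pout$, concave; each superlevel set $\{\Pout\ge\eta\}$ then equals $\{\log\Pout\ge\log\eta\}$ (or all of $\UU$ when $\eta\le 0$), hence is convex, and $\Pout$ is quasiconcave.

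The crux — and the step I expect to be most delicate — is the requirement that \emph{at least one} objective be strictly quasiconcave. Since $-\Pin$ does not depend on $\alpha$, it is constant along every segment of fixed $d$ and so cannot be strictly quasiconcave; the property must be carried by $\Pout$. I would avoid trying to prove joint strict concavity of $\log\Pout$ (its Hessian need only be negative semidefinite, so this may simply be false) and instead use the characterization that a concave function is strictly quasiconcave exactly when it is not constant on any line segment of its domain. Concavity of $\log\Pout$ is in hand, so it remains to exclude constancy on a generic segment $[\uu_1,\uu_2]\subset\UU$: if the segment has nonzero displacement in $\alpha$, the strict concavity of $\Pout(\cdot,d)$ in $\alpha$ (Lemma~2 in \cite{asswad_cdc2024}) forbids constancy; if the displacement is purely in $d$, the explicit expression \eqref{eq:Pout} shows that $d\mapsto\Pout(\alpha,d)$ is non-constant on every $d$-interval. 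Hence $\Pout$ is constant on no segment and is strictly quasiconcave.

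With all three hypotheses verified, Lemma~\ref{lem:global-POF} yields the claim directly for $\theta\in(0,1)$: a locally Pareto optimal maximizer of $P_\theta$ is globally Pareto optimal on $\UU$. I would close by addressing the degenerate endpoints: at $\theta=1$ the scalar problem maximizes $\Pout$ alone, and its strict quasiconcavity forces a unique maximizer, which is therefore genuinely (not merely weakly) Pareto optimal; the feed-minimizing case $\theta=0$ is handled separately, consistent with the boundary behaviour of the POF seen in Figures~\ref{fig:pareto-contours}--\ref{fig:pof}.
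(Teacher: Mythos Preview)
Your approach matches the paper's: verify the three hypotheses of Lemma~\ref{lem:global-POF} (convexity of $\UU$, quasiconcavity of both objectives, strict quasiconcavity of at least one) and then invoke the lemma. The one substantive divergence is \emph{which} objective carries the strict quasiconcavity. The paper assigns it to $-\Pin$, relying on the convention (as in \cite{avriel_2010}) that strict quasiconcavity only demands $f(\lambda x+(1-\lambda)y)>\min\{f(x),f(y)\}$ when $f(x)\neq f(y)$; under that definition every non-constant affine function qualifies, and the proof ends in one line. You instead adopt the stronger convention (strict inequality for all $x\neq y$), correctly note that $-\Pin$ then fails it along segments of constant $d$, and shift the burden to $\Pout$. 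Both routes reach the conclusion; the paper's is shorter because it exploits the weaker definition, while yours is self-contained under the stricter one.

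One caution on your route: in the case of a segment with nonzero $\alpha$-displacement you invoke strict concavity of $\Pout(\cdot,d)$ at \emph{fixed} $d$, but a generic segment in $\UU$ varies in both coordinates, so Lemma~2 of \cite{asswad_cdc2024} does not apply to it directly. The conclusion that $\log\Pout$ is non-constant on such a segment is still true, but as written the justification is incomplete; you would need an extra step combining joint concavity on $\UU$ with strict concavity along $\alpha$-slices (or a direct computation from \eqref{eq:Pout}) to close it.
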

\begin{proof}
Indeed, $\UU$ is a convex set, $\Pout(\alpha,d)$ is log-concave
therefore quasiconcave, and $\Pin(d)$ is an affine function
(non-constant) therefore strictly quasiconcave as well \cite{avriel_2010}.
Hence, the proof follows from Lemma \ref{lem:global-POF}.
\end{proof}

Proposition \ref{prop:global-POF} proves that a unique global optimum
exists for any given $\theta\in[0,1]$.
Figure \ref{fig:pareto-contours} illustrates the POF in the $(\alpha,d)$
plane and Figure \ref{fig:pof} shows the POF in the $(\Pout,\Pin)$ plane.
\section{Dependence of solutions on $\Sin$}\label{sec:feed}

The substrate feed $\Sin$ is an operational parameter
of the bioreactor that is commonly used as a control
input for bioprocesses. We have mentioned in section \ref{sec:pb}
that performance improves as it increases \cite{martinez_2023,mauri_2020}.
In this section we explore in detail the impact of adding
$\Sin$ as a third control variable on the optimization problem
properties and to the bioprocess.
We consider here the control $\ww=(\alpha,d,\Sin)\in\WW$ where
\begin{equation}\label{eq:U-3D}
\WW = \cb{(\alpha,d,\Sin)\in(0,1)\times(\R_+^*)^2,
\psi_\alpha^{-1}(d)<\Sin}
\end{equation}
is the set of admissible controls.

\subsection{Convexity properties}

Here we explore the convexity of the problems defined in
section \ref{sec:pb} with respect to the new control $\ww\in\WW$.

\begin{proposition}\label{nonconvex-3d}
The admissible control set $\WW$ is nonconvex.
\end{proposition}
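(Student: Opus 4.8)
The plan is to exhibit two admissible points in $\WW$ whose convex combination violates the defining constraint $\psi_\alpha^{-1}(d)<\Sin$, thereby showing that $\WW$ fails to be convex. The obstacle to convexity is the joint nonlinearity of $\psi_\alpha^{-1}(d)$ in the pair $(\alpha,d)$: from \eqref{eq:psi-alpha}, the dependence on $\alpha$ enters through the terms $\frac{1}{1-\alpha}$ and $\frac{1}{\alpha\beta\gamma}$, which are manifestly nonlinear (indeed convex blowing up at $\alpha\to 0$ and $\alpha\to 1$). So the feasibility boundary $\Sin=\psi_\alpha^{-1}(d)$ is a curved surface, and I expect the region lying strictly above it (larger $\Sin$) to be nonconvex because $\psi_\alpha^{-1}$ is not a concave function of $(\alpha,d,\Sin)$ jointly.

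First I would set up the counterexample concretely. Fix a single value $d_0>0$ to simplify, so that the constraint reduces to $\Sin>\psi_\alpha^{-1}(d_0)=:g(\alpha)$, where $g(\alpha)=\phi^{-1}\!\pp{\frac{d_0}{1-\alpha}}+\frac{\rho^{-1}(d_0\cdot\mu^{-1}(d_0))}{\alpha\beta\gamma}$. The second term is a strictly convex, decreasing function of $\alpha$ on $(0,1)$ (being a positive constant over $\alpha$), and the first term $\phi^{-1}\!\pp{\frac{d_0}{1-\alpha}}$ is increasing and, I expect, convex in $\alpha$ as well. I would verify that $g$ is strictly convex on $(0,1)$; then for two distinct values $\alpha_1,\alpha_2$ the midpoint satisfies $g\!\pp{\frac{\alpha_1+\alpha_2}{2}}<\frac{g(\alpha_1)+g(\alpha_2)}{2}$. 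The key trick is to place both endpoints right on (or just above) the boundary: choose $\ww_i=(\alpha_i,d_0,\Sin_i)$ with $\Sin_i$ slightly exceeding $g(\alpha_i)$, so both are admissible, but arrange $\frac{\Sin_1+\Sin_2}{2}$ to fall strictly below $g\!\pp{\frac{\alpha_1+\alpha_2}{2}}$. Because $g$ is strictly convex, there is slack to do this: the average of the $\Sin_i$ values sits near $\frac{g(\alpha_1)+g(\alpha_2)}{2}$, which strictly exceeds the value of $g$ at the midpoint $\alpha$, so the convex combination lands in the infeasible region and $\WW$ is not convex.

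The main obstacle, and the one step I would carry out carefully rather than assert, is establishing strict convexity of $g$, equivalently of $\psi_\alpha^{-1}(d_0)$ as a function of $\alpha$. The term $\frac{C}{\alpha\beta\gamma}$ with $C=\rho^{-1}(d_0\mu^{-1}(d_0))>0$ is immediately strictly convex in $\alpha$. For the first term, writing $\phi^{-1}(y)=\frac{k_s y}{\phimax-y}$, I would substitute $y=\frac{d_0}{1-\alpha}$ and check that the composition is convex in $\alpha$ on the feasible range $y<\phimax$; a clean route is to note that $\phi^{-1}$ is increasing and convex on $[0,\phimax)$ and $\alpha\mapsto\frac{d_0}{1-\alpha}$ is increasing and convex, so the composition of an increasing convex outer function with a convex inner function is convex. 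Since the sum of a convex and a strictly convex function is strictly convex, $g$ is strictly convex, completing the argument. Alternatively, if checking convexity of the first term is delicate, it suffices that the strictly convex second term dominates, which I can guarantee by choosing $\alpha_1,\alpha_2$ close to $0$ where $\frac{1}{\alpha}$ has large curvature. I would present the final counterexample with explicit numerical values of $\alpha_1,\alpha_2,d_0,\Sin_1,\Sin_2$ drawn from Table \ref{tab:jc-params} to make the violation concrete and verifiable.
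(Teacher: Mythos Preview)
Your argument has the geometry inverted. The set $\WW=\{(\alpha,d,\Sin):g_0(\alpha,d)<\Sin\}$ with $g_0(\alpha,d)=\psi_\alpha^{-1}(d)$ is the (strict) \emph{epigraph} of $g_0$, and epigraphs of convex functions are convex. You correctly argue that for fixed $d_0$ the map $\alpha\mapsto g(\alpha)=g_0(\alpha,d_0)$ is strictly convex, but this works \emph{against} you: strict convexity gives $g\!\pp{\tfrac{\alpha_1+\alpha_2}{2}}<\tfrac{g(\alpha_1)+g(\alpha_2)}{2}$, so if $\Sin_i\approx g(\alpha_i)$ then the midpoint satisfies $\tfrac{\Sin_1+\Sin_2}{2}\approx\tfrac{g(\alpha_1)+g(\alpha_2)}{2}>g\!\pp{\tfrac{\alpha_1+\alpha_2}{2}}$, which means the midpoint \emph{remains feasible}. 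Your two sentences ``arrange $\tfrac{\Sin_1+\Sin_2}{2}$ to fall strictly below $g$ at the midpoint'' and ``the average \ldots\ strictly exceeds the value of $g$ at the midpoint'' are mutually contradictory; the second one is the true consequence of convexity, and it does not produce an infeasible convex combination.

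To prove nonconvexity of $\WW$ you need $g_0$ to be \emph{non}convex in $(\alpha,d)$ jointly, i.e.\ to find a segment along which $g_0$ lies strictly above its chord. Restricting to a fixed-$d$ slice cannot work, since (as you yourself show) $g_0$ is convex there; indeed this is essentially why the paper's two-dimensional set $\UU$ is convex. The paper's proof proceeds differently: it identifies $\overline{\WW}$ with $\epi(g_0)$ and then shows, for the parameter values of Table~\ref{tab:jc-params}, that the Hessian of $g_0$ is indefinite on part of the domain, so $g_0$ is not convex and hence $\overline{\WW}$ is not convex. If you want an explicit two-point counterexample, you must vary $d$ (or both $\alpha$ and $d$) across the two points and locate a direction of negative curvature of $g_0$.
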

\begin{proof}
Consider the function $g_0(\alpha,d)=\psi_\alpha^{-1}(d)$.
Its epigraph is clearly the closure of the set $\WW$
(i.e. the smallest closed set containing $\WW$, denoted $\overline{\WW}$).
We remind that a function is convex if and only if its epigraph is a convex
set \cite{boyd_2004}.
Nevertheless, for the given parameter set in Table \ref{tab:jc-params},
$g_0$ is not a convex function since the second order convexity condition
does not hold for all $(\alpha,d)\in\UU$: As demonstrated in Figure
\ref{fig:g0-hessian}, the Hessian of $g_0$ is 
non-definite over portions of the domain.

Therefore, $\epi(g_0) = \overline{\WW}$ is not a convex set.
\end{proof}
\begin{figure}[h]
    \centering
    \includegraphics[width=\columnwidth]{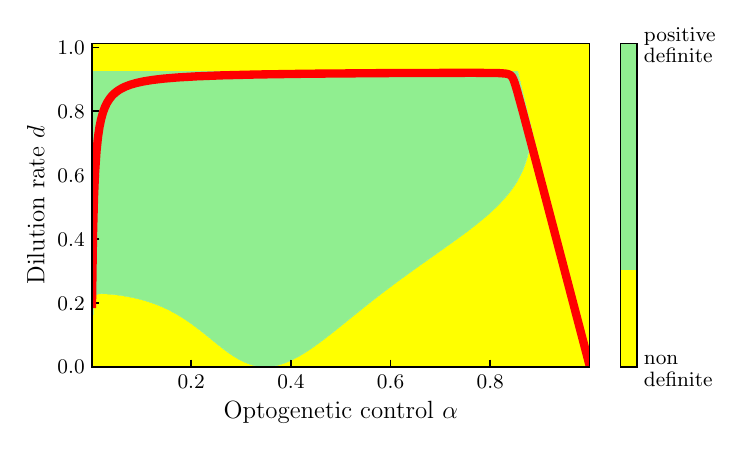}
    \caption{The positive-definiteness of the Hessian of the function
    $g_0(\alpha,d)$. Yellow regions indicate where the Hessian is non-definite, \text{i.e.} the Hessian is found having a positive and a negative eigenvalue.}
    \label{fig:g0-hessian}
\end{figure}
Since $\WW$ is not necessarily a convex set, the optimization problems
over $\WW$ are not convex problems regardless of the considered
objective function. We can however explore weaker properties.

\subsection{Problem biconvexity}

The set $\WW=\UU(z)\times\R_+^*$ is biconvex with
$\UU(z)=\cb{(\alpha,d)\in(0,1)\times\R_+^*,\psi_\alpha^{-1}(d)< z}$
for any $z>0$ \cite{gorski_2007}. This follows directly
from the convexity of $\UU(z)$ that corresponds to the admissible
controls set defined in section \ref{sec:opti} and the trivial
convexity of $\R_+^*$.

We now explore the biconvexity/biconcavity of the objective functions,
which would ensure the existence of global maxima. Uniqueness
however would not be guaranteed. Having comprehensively investigated
in Section \ref{sec:opti} the optima of the objective functions
$\Pout,\Pin,\Py,$ and $P_\theta$ with respect to $\uu=(\alpha,d)$
over $\UU$ for a fixed $\Sin>0$,
we now study their convexity/concavity with respect to $\Sin$ for
a fixed $(\alpha,d)$.

The functions $\Pout,\Pin,$ and $P_\theta$ are affine in $\Sin$,
therefore they do not have an optimum with respect to $\Sin$ over
an unbounded interval, therefore we introduce the open bounded set
$\BB(z)=\UU(z)\times (0,z)$ and its closure
$\overline{\BB(z)} = \overline{\UU(z)}\times[0,z]$ with
$\overline{\UU(z)} = \cb{(\alpha,d)\in[0,1]\times\R_+,\psi_\alpha^{-1}(d)\leq z}$.

Both $\Pout$ and $\Pin$ are affine and increasing with respect to $\Sin$,
therefore their optima lie on the border of $\overline{\BB(z)}$.
For a fixed pair $(\alpha,d)\in\overline{\UU(z)}$, $\Pout(\alpha,d,\Sin)$
reaches its maximum for $\Sin=z$ while $\Pin(d,\Sin)$ reaches its minimum
for $\Sin=0$.
We skip the optimization of the process yield since we have proven in
section \ref{sec:opti} that it has a trivial maximum that lies
on the border of $\UU(z)$ which corresponds to the singular case
of a batch ($d=0$) process.

\begin{figure}[t]
    \centering
    \includegraphics[width=\columnwidth]{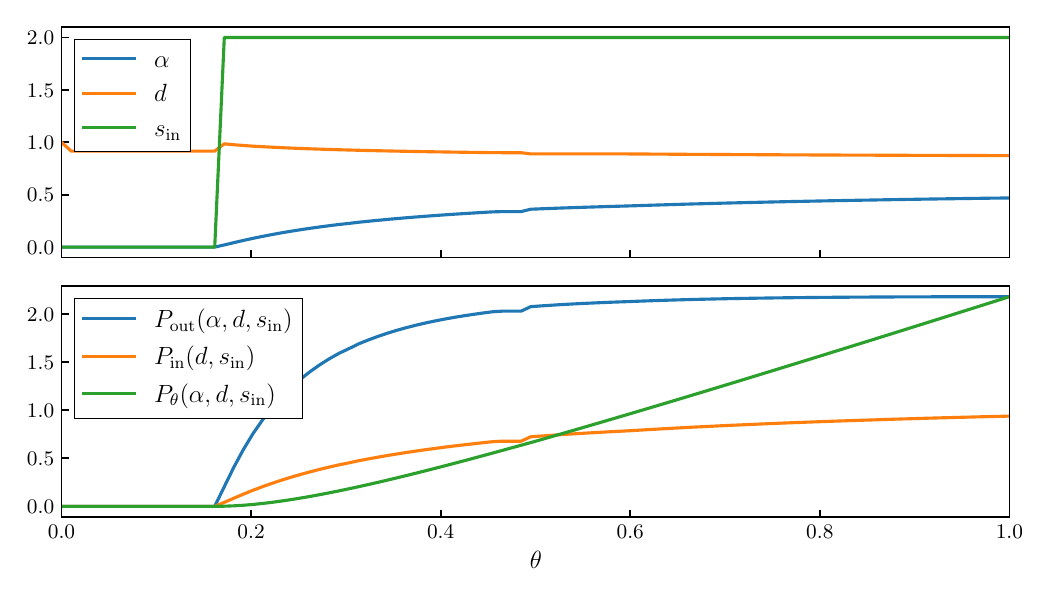}
    \caption{The top plot shows the optimal controls that corresponds to all
    $\theta\in[0,1]$ values. The bottom plot illustrates the corresponding objective
    values for the optimal controls in the top figure.}
    \label{fig:pof-theta}
\end{figure}

\subsection{Bioreactor net profit with respect to $\Sin$}

Considering the biconvex optimization problem over $\overline{\BB(z)}$,
the scalar weighting function of the MOP defined in \ref{sec:pb}
has a unique solution for a fixed $\Sin$ as established in \ref{sec:opti}.
Here we explore the effect of $\Sin$ on the bioreactor net profit
\begin{align}
P_{\theta}(\alpha,d,\Sin)
&= \theta\Pout(\alpha,d,\Sin) - (1-\theta)\Pin(d,\Sin)\\
&= \theta\frac{\alpha\beta\gamma d\pp{\Sin - \psi_\alpha^{-1}(d)}}{\mu^{-1}(d)} - (1-\theta)d\Sin \label{eq:P-theta-expr}
\end{align}
for a fixed $(\alpha,d)\in\overline{\UU(z)}$ and a given $\theta\in[0,1]$.
Since $P_\theta$ is affine with respect to $\Sin$, it reaches
therefore its maximum with respect to $\Sin$ on the border of the
closed interval $[0,z]$.
The function is either increasing or decreasing, depending on the
sign of the partial derivative of $P_\theta$ with respect to $\Sin$
(i.e. the coefficient of $\Sin$).
From \eqref{eq:P-theta-expr},
\begin{align}
\partials{P_\theta}{\Sin} 
&= \bb{\theta \pp{1 + \frac{\alpha\beta\gamma}{\mu^{-1}(d)}}
    - 1} d\nonumber\\
&= \pp{\theta - \theta_0(\alpha,d)}
    \pp{1 + \frac{\alpha\beta\gamma}{\mu^{-1}(d)}}d\label{eq:theta0}\\
&\text{with}~\theta_0(\alpha,d)
=\frac{1}{1 + \alpha\beta\gamma/\mu^{-1}(d)}.\nonumber
\end{align}
It follows that for a fixed $(\alpha,d)\in\UU$, $P_\theta$
is strictly increasing with respect to $\Sin$
if $\theta>\theta_0(\alpha,d)$ and decreasing otherwise.

\begin{figure}[t]
    \centering
    \includegraphics[width=\columnwidth]{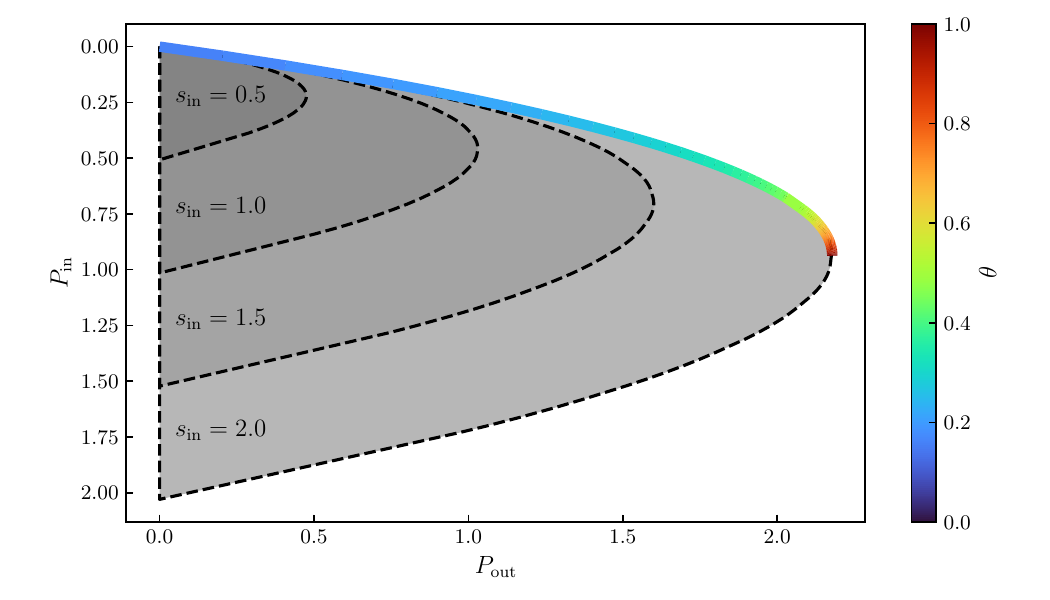}
    \caption{The image of $\overline{\BB(z)}$ with $z=2$ in
    the multi-objective space; the color-gradient line is the POF for $\theta\in[0,1]$.
    The shaded areas correspond to the set of reachable $\Pout$ and $\Pin$
    for different values of $\Sin$ (reachable sets for larger $\Sin$ including those for smaller $\Sin$).}
    \label{fig:pof-3d}
\end{figure}

\begin{proposition}\label{prop:theta0}
For all $(\alpha,d)\in\UU$, $0<\theta_0(\alpha,d)<1$.
\end{proposition}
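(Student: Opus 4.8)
The plan is to read off both strict inequalities directly from the explicit form of $\theta_0$, reducing everything to a single positivity statement. Writing $\theta_0(\alpha,d)=1/(1+x)$ with $x=\alpha\beta\gamma/\mu^{-1}(d)$, the chain $0<\theta_0<1$ is equivalent to the denominator satisfying $1<1+x<\infty$, i.e.\ to $x$ being finite and strictly positive. So the proposition collapses to showing that $\alpha\beta\gamma/\mu^{-1}(d)\in(0,\infty)$ for every $(\alpha,d)\in\UU$.

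First I would record the signs of the factors. The biological constants $\beta$, $\gamma$, $\qmin$, $\mumax$ in Table~\ref{tab:jc-params} are all strictly positive, and membership in $\UU$ (see \eqref{eq:U-2D}) forces $\alpha\in(0,1)$, hence $\alpha>0$; therefore $\alpha\beta\gamma>0$. For the remaining factor I would use the affine expression already exploited above, namely $1/\mu^{-1}(d)=(\mumax-d)/(\qmin\mumax)$, equivalently $\mu^{-1}(d)=\qmin\mumax/(\mumax-d)$, so that $x=\alpha\beta\gamma(\mumax-d)/(\qmin\mumax)$.

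The only genuine point --- and the expected obstacle --- is to justify that every admissible control satisfies $d<\mumax$, which makes $\mu^{-1}(d)$ finite and strictly positive and hence $x\in(0,\infty)$. This is not an added hypothesis but is implied by $(\alpha,d)\in\UU$: by \eqref{eq:U-2D} we have $\psi_\alpha^{-1}(d)<\Sin<\infty$, while the defining expression \eqref{eq:psi-alpha} for $\psi_\alpha^{-1}$ contains the term $\mu^{-1}(d)$, which exists only for $d$ in the range of $\mu$, i.e.\ $d\in[0,\mumax)$; equivalently, existence of the coexistence equilibrium requires the quota $q^*=\mu^{-1}(d)$ to be finite and at least $\qmin$, which holds exactly when $0\le d<\mumax$. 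With $d\in(0,\mumax)$ and $\alpha\beta\gamma>0$ in hand, $x$ is strictly positive and finite, so $1+x>1$ and $\theta_0=1/(1+x)\in(0,1)$, which is the claim. No estimate is required; the whole content lies in noting that admissibility already excludes $d\ge\mumax$.
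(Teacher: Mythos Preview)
Your argument is correct and follows essentially the same route as the paper: both write $\theta_0=g_1\circ g_2$ with $g_1(t)=1/(1+t)$ and $g_2(\alpha,d)=\alpha\beta\gamma/\mu^{-1}(d)$, observe that $g_2>0$ on $\UU$, and conclude from $g_1(\R_+^*)=(0,1)$. You simply make explicit the justification (via $d<\mumax$ on $\UU$) that the paper leaves implicit when asserting $g_2$ is strictly positive.
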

\begin{proof}
Consider the functions $g_1$ and $g_2$ defined as
$$g_1(t) = \frac{1}{1 + t}~\text{and}~
g_2(\alpha,d)=\frac{\alpha\beta\gamma}{\mu^{-1}(d)}.$$
Then $\theta_0 = g_1\circ g_2$.
The function $g_2$ is strictly positive on $\UU$.
In addition, $g_1(\R_+^*)=(0,1)$.
Therefore, $\theta_0(\alpha,d)=g_1(g_2(\alpha,d))\in(0,1)$.
\end{proof}

It follows from \eqref{eq:theta0} and Proposition \ref{prop:theta0}
that, for a fixed $(\alpha,d)\in\UU$, $\partial P_\theta/\partial\Sin$
changes signs once at $\theta=\theta_0(\alpha,d)$.
Indeed, for values of $\theta<\theta_0$, $P_\theta(\alpha,d,\Sin)$ decreases
as $\Sin$ increases therefore the maximum of $P_\theta$ is reached at $\Sin=0$.
Otherwise, the net profit $P_\theta$ increases with $\Sin$ linearly
until it reaches its maximum at the border for $\Sin=z$.
Figure~\ref{fig:pof-theta} illustrates this result with respect to
all values of $\theta\in[0,1]$, for an upperbound on $s_\textit{in}$ fixed to $z=2$.
Clearly, $\Pout(\alpha,d,0)=\Pin(d,0)=0$
because $\UU$ is the empty set for $\Sin = 0$.
Otherwise, $\Sin$ takes its maximal value in the bounded
interval to maximize $P_\theta$.

Figure \ref{fig:pof-3d} shows the image of $\overline{\BB(z)}$
in the $(\Pout,\Pin)$ plane. It demonstrates the reachable
values of $\Pout$ and $\Pin$ for different fixed values of $\Sin$,
as well as the set of Pareto optimal solutions.

In conclusion, the added degree of freedom does not give optimal
solutions of the problem on the open set $\WW$.
Instead, it should be chosen a priori at its optimal boundary value
within the limits of feasibility and practicality to reach a target performance.
As a result of its optimal choice saturating to boundaries, for fixed values of
$\alpha$ and $d$, a discontinuity is introduced in its optimal choice as a function
of $\theta$.

\section{Conclusions}\label{sec:conclusion}

Starting from our algal-bacterial consortium model from \cite{asswad_cdc2024}
and the coexistence conditions established therein, we formulated optimization problems
for the system at its coexistence steady state that are relevant to biotechnological
applications.
These criteria were then integrated into a multi-objective optimization problem.
We demonstrated the uniqueness of solutions for these problems when controlling the
dilution rate $d$ and the optogenetic control $\alpha$, holding regardless of
specific biological parameter values,
and validated our findings through numerical simulations for realistic biological
parameter values.

Expanding the control framework, we introduced the bioreactor nutrient feed $\Sin$ as
a third control variable and analyzed its influence on the optimization problems
and their solutions.
In line with previous studies \cite{martinez_2023,mauri_2020}, for this class
of optimization criteria, our results indicated that enriching the nutrient feed
enhances system performance overall. The maximum allowable $\Sin$ should then be
chosen based on different biotechnological constraints (\textit{e.g.} maximal
allowable culture density, etc.).
Next, we defined the bioreactor net profit metric as an alternative to the
bioreactor yield, offering a weighted multi-objective perspective that is relevant
for industrial biotechnological applications. 
In the case of net profit, we showed that the role of the nutrient feed ($\Sin$)
is more complex (Proposition~\ref{prop:theta0} and subsequent discussion).
While this economy metric can be extended to other bioreactor models,
the uniqueness of solutions would need to be reexamined in each specific case.

Numerical simulations indicate that optimal steady-state solutions generally lie near washout boundaries. For future work, we will address the challenge of designing feedback control strategies that drive the system toward its optimal coexistence equilibrium and guarantee steady-state coexistence and optimality robustly, in spite of modeling uncertainties, limited state observations and inevitable sources of noise.

\section*{Acknowledgements}
This work was supported in part by the French national research agency (ANR) via project Ctrl-AB [ANR-20-CE45-0014].

\bibliography{ref.bib}

\begin{thebibliography}{10}
\providecommand{\url}[1]{#1}
\csname url@samestyle\endcsname
\providecommand{\newblock}{\relax}
\providecommand{\bibinfo}[2]{#2}
\providecommand{\BIBentrySTDinterwordspacing}{\spaceskip=0pt\relax}
\providecommand{\BIBentryALTinterwordstretchfactor}{4}
\providecommand{\BIBentryALTinterwordspacing}{\spaceskip=\fontdimen2\font plus
\BIBentryALTinterwordstretchfactor\fontdimen3\font minus
  \fontdimen4\font\relax}
\providecommand{\BIBforeignlanguage}[2]{{%
\expandafter\ifx\csname l@#1\endcsname\relax
\typeout{** WARNING: IEEEtran.bst: No hyphenation pattern has been}%
\typeout{** loaded for the language `#1'. Using the pattern for}%
\typeout{** the default language instead.}%
\else
\language=\csname l@#1\endcsname
\fi
#2}}
\providecommand{\BIBdecl}{\relax}
\BIBdecl

\bibitem{rizwan_2018}
M.~Rizwan, G.~Mujtaba, S.~A. Memon, K.~Lee, and N.~Rashid, ``Exploring the
  potential of microalgae for new biotechnology applications and beyond: {{A}}
  review,'' \emph{Renewable and Sustainable Energy Reviews}, vol.~92, pp.
  394--404, Sep. 2018.

\bibitem{polat_2022}
E.~Polat and M.~Alt{\i}nba{\c s}, ``Optimization of {{Auxenochlorella}}
  protothecoides lipid content using response surface methodology for biofuel
  production,'' \emph{Biomass Conversion and Biorefinery}, vol.~12, no.~6, pp.
  2133--2147, Jun. 2022.

\bibitem{sajjadi_2018}
B.~Sajjadi, W.-Y. Chen, A.~A.~A. Raman, and S.~Ibrahim, ``Microalgae lipid and
  biomass for biofuel production: {{A}} comprehensive review on lipid
  enhancement strategies and their effects on fatty acid composition,''
  \emph{Renewable and Sustainable Energy Reviews}, vol.~97, pp. 200--232, Dec.
  2018.

\bibitem{nodooshan_2018}
K.~G. Nodooshan, R.~J. Moraga, S.-J.~G. Chen, C.~Nguyen, Z.~Wang, and
  S.~Mohseni, ``Environmental and {{Economic Optimization}} of {{Algal Biofuel
  Supply Chain}} with {{Multiple Technological Pathways}},'' \emph{Industrial
  \& Engineering Chemistry Research}, vol.~57, no.~20, pp. 6910--6925, May
  2018.

\bibitem{jayaraman_2015}
S.~K. Jayaraman and R.~R. Rhinehart, ``Modeling and {{Optimization}} of {{Algae
  Growth}},'' \emph{Industrial \& Engineering Chemistry Research}, vol.~54,
  no.~33, pp. 8063--8071, Aug. 2015.

\bibitem{rapp_2020}
K.~M. Rapp, J.~P. Jenkins, and M.~J. Betenbaugh, ``Partners for life: Building
  microbial consortia for the future,'' \emph{Current Opinion in
  Biotechnology}, vol.~66, pp. 292--300, Dec. 2020.

\bibitem{shong_2012}
J.~Shong, M.~R. Jimenez~Diaz, and C.~H. Collins, ``Towards synthetic microbial
  consortia for bioprocessing,'' \emph{Current Opinion in Biotechnology},
  vol.~23, no.~5, pp. 798--802, Oct. 2012.

\bibitem{martinez_2023}
C.~Mart{\'i}nez, E.~Cinquemani, H.~de~Jong, and J.-L. Gouz{\'e}, ``Optimal
  protein production by a synthetic microbial consortium: Coexistence,
  distribution of labor, and syntrophy,'' \emph{Journal of Mathematical
  Biology}, vol.~87, no.~1, p.~23, Jul. 2023.

\bibitem{asswad_cdc2024}
R.~Asswad, W.~Djema, O.~Bernard, J.-L. Gouz{\'e}, and E.~Cinquemani,
  ``Optimization of microalgae biosynthesis via controlled algal-bacterial
  symbiosis,'' in \emph{2024 {{IEEE}} 63rd {{Conference}} on {{Decision}} and
  {{Control}} ({{CDC}})}, Dec. 2024, pp. 589--594.

\bibitem{raghavan_2020}
A.~R. Raghavan, K.~Salim, and V.~G. Yadav, ``Optogenetic {{Control}} of
  {{Heterologous Metabolism}} in {{E}}. coli,'' \emph{ACS Synthetic Biology},
  vol.~9, no.~9, pp. 2291--2300, Sep. 2020.

\bibitem{jong_2017}
H.~de~Jong, J.~Geiselmann, and D.~Ropers, ``Resource {{Reallocation}} in
  {{Bacteria}} by {{Reengineering}} the {{Gene Expression Machinery}},''
  \emph{Trends in Microbiology}, vol.~25, no.~6, pp. 480--493, Jun. 2017.

\bibitem{harvey_2014}
E.~Harvey, J.~Heys, and T.~Gedeon, ``Quantifying the effects of the division of
  labor in metabolic pathways,'' \emph{Journal of Theoretical Biology}, vol.
  360, pp. 222--242, Nov. 2014.

\bibitem{bayen_2019}
T.~Bayen and P.~Gajardo, ``On the steady state optimization of the biogas
  production in a two-stage anaerobic digestion model,'' \emph{Journal of
  Mathematical Biology}, vol.~78, no.~4, pp. 1067--1087, Mar. 2019.

\bibitem{treloar_2020}
N.~J. Treloar, A.~J.~H. Fedorec, B.~Ingalls, and C.~P. Barnes, ``Deep
  reinforcement learning for the control of microbial co-cultures in
  bioreactors,'' \emph{PLOS Computational Biology}, vol.~16, no.~4, p.
  e1007783, Apr. 2020.

\bibitem{salzano_2022}
D.~Salzano, D.~Fiore, and M.~{di Bernardo}, ``Ratiometric control of cell
  phenotypes in~monostrain microbial consortia,'' \emph{Journal of The Royal
  Society Interface}, vol.~19, no. 192, p. 20220335, Jul. 2022.

\bibitem{bertaux_2021}
F.~Bertaux, J.~Ruess, and G.~Batt, ``External control of microbial populations
  for bioproduction: {{A}} modeling and optimization viewpoint,'' \emph{Current
  Opinion in Systems Biology}, vol.~28, p. 100394, Dec. 2021.

\bibitem{mauri_2020}
M.~Mauri, J.-L. Gouz{\'e}, H.~de~Jong, and E.~Cinquemani, ``Enhanced production
  of heterologous proteins by a synthetic microbial community: {{Conditions}}
  and trade-offs,'' \emph{PLOS Computational Biology}, vol.~16, no.~4, p.
  e1007795, Apr. 2020.

\bibitem{miettinen_1998}
K.~Miettinen, \emph{Nonlinear {{Multiobjective Optimization}}}, ser.
  International {{Series}} in {{Operations Research}} \& {{Management
  Science}}, F.~S. Hillier, Ed.\hskip 1em plus 0.5em minus 0.4em\relax Boston,
  MA: Springer US, 1998, vol.~12.

\bibitem{gorski_2007}
J.~Gorski, F.~Pfeuffer, and K.~Klamroth, ``Biconvex sets and optimization with
  biconvex functions: A survey and extensions,'' \emph{Mathematical Methods of
  Operations Research}, vol.~66, no.~3, pp. 373--407, Dec. 2007.

\bibitem{automatica_under_review}
R.~Asswad, W.~Djema, O.~Bernard, J.-L. Gouz{\'e}, and E.~Cinquemani, ``Static
  and {{Dynamic Optimal Control}} of {{Vitamin-Mediated Algal-Bacterial
  Co-Cultures}} under {{Optogenetic Regulation}},'' \emph{Journal article under
  review}, 2025.

\bibitem{boyd_2004}
S.~Boyd and L.~Vandenberghe, \emph{Convex {{Optimization}}}, 1st~ed.\hskip 1em
  plus 0.5em minus 0.4em\relax Cambridge University Press, Mar. 2004.

\bibitem{avriel_2010}
M.~Avriel, W.~E. Diewert, S.~Schaible, and I.~Zang, \emph{Generalized
  {{Concavity}}}, ser. Classics in {{Applied Mathematics}}.\hskip 1em plus
  0.5em minus 0.4em\relax {Society for Industrial and Applied Mathematics},
  Jan. 2010.

\end{thebibliography}
\bibliographystyle{IEEEtran}

\end{document}